\newtheorem{thm}{Theorem}\newtheorem{lemma}[thm]{Lemma}
\newtheorem{defi}[thm]{Definition}
\newtheorem{prop}[thm]{Proposition}
\newcommand{\rr}{{\mathbb{R}}}
\newcommand{\e}{\varepsilon}
\newcommand{\vip}{\vskip.2cm}
\newcommand{\indiq}{\hbox{\rm 1}{\hskip -2.8 pt}\hbox{\rm I}}
\newcommand{\dd}{{\rm d}}
\newcommand{\cM}{{\mathcal{M}}}
\newcommand{\cP}{{\mathcal{P}}}
\begin{document}

\title[Keller-Segel equation]
{A simple proof of non-explosion for measure solutions of the Keller-Segel equation}

\author{Nicolas Fournier and Yoan Tardy}
\address{Sorbonne Universit\'e, LPSM-UMR 8001, Case courrier 158, 75252 Paris Cedex 05, France.}
\email{nicolas.fournier@sorbonne-universite.fr, yoan.tardy@sorbonne-universite.fr}
\subjclass[2010]{35K57, 35D30, 92C17}

\keywords{Keller-Segel equation, Chemotaxis, Existence of weak solutions.}

\begin{abstract}
We give a simple proof, relying on a {\it two-particles} moment computation, that there exists a global
weak solution to the $2$-dimensional parabolic-elliptic Keller-Segel equation
when starting from any initial measure $f_0$ such that $f_0(\rr^2)< 8 \pi$.
\end{abstract}

\maketitle

\section{Introduction}

\subsection{The model}
We consider the classical parabolic-elliptic Keller-Segel model, also called Patlak-Keller-Segel, 
of chemotaxis in $\rr^2$, which writes
\begin{gather}\label{EDP1}
\partial_t f + \nabla \cdot ( f \nabla c) = \Delta f \qquad \hbox{and}\qquad  \Delta c + f =0.
\end{gather}
The unknown $(f,c)$ is composed of two nonnegative functions $f_t(x)$ and $c_t(x)$ 
of $t\geq 0$ and $x\in \rr^2$, and the initial condition $f_0$ is given.
\vip

This equation 
models the collective motion of a population of bacteria which emit a chemical substance that attracts them.
The quantity $f_t(x)$ represents the density of bacteria at position $x \in \rr^2$ at time $t\geq 0$,
while $c_t(x)$ represents the concentration of chemical substance at position $x \in \rr^2$ at time $t\geq 0$.
Note that in this model, the speed of diffusion of the chemo-attractant is supposed to be infinite.
This equation has been introduced by Keller and Segel \cite{ks}, see also Patlak \cite{p}.
We refer to the recent book of Biler \cite{b} and to the review paper of Arumugam and Tyagi \cite{at}
for some complete descriptions of what is known about this model.

\vip

We classically observe, see e.g. Blanchet-Dolbeault-Perthame 
\cite[Page 4]{bdp}, that necessarily
$ \nabla c_t = K\ast f_t $ for each $t\geq 0$, where 
\begin{align*}
K(x) = -\frac{x}{2\pi\|x\|^2} \quad\mbox{for } x \in \rr^2\setminus \{0\} \quad \mbox{ and (arbitrarily)} 
\quad K(0) = 0.
\end{align*}
Hence \eqref{EDP1} may be rewritten as
\begin{gather}\label{EDP}
\partial_t f + \nabla \cdot [ f\; (K\ast f)] = \Delta f.
\end{gather}

\subsection{Weak solutions}
We will deal with weak measure solutions. For each $M>0$, we set
$$
\cM_M(\rr^2) = \Big\{ \mu \mbox{ nonnegative measure on } \rr^2 \mbox{ such that } \mu (\rr^2) = M \Big\}
$$
and we endow $\cM_M(\rr^2)$ with the weak convergence topology,
i.e. taking $C_b(\rr^d)$, the set of continuous and bounded functions, as set of test functions.
We also denote by $C_b^2(\rr^d)$ the set of $C^2$-functions, bounded together with all their derivatives.
The following notion of weak solutions is classical, see e.g. Blanchet-Dolbeault-Perthame 
\cite[Page 5]{bdp}.

\begin{defi}\label{sf}
Fix $M>0$. We say that $f\in C([0,\infty), \cM_M(\rr^2))$ is a weak solution of \eqref{EDP} if 
for all $\varphi \in C^2_b(\rr^2)$, all $t\ge 0$, 
\begin{align*}
\int_{\rr^2} \varphi (x) f_t (\dd x) =& \int_{\rr^2} \varphi (x) f_0 (\dd x) + \int_0^t \int_{\rr^2} 
\Delta \varphi (x) f_s(\dd x)\dd s \\
&+  \frac12 \int_0^t \int_{\rr^2} \int_{\rr^2} K(x-y)\cdot ( \nabla \varphi (x) -  
\nabla \varphi (y))f_s(\dd x) f_s(\dd y) \dd s.
\end{align*}
\end{defi}

All the terms in this equality are well-defined. In particular concerning the last term,
it holds that $|K(x-y)\cdot ( \nabla \varphi (x) -  
\nabla \varphi (y))| \leq \|\nabla^2\varphi\|_\infty/2\pi$. However, 
$K(x-y)\cdot (\nabla \varphi (x)-\nabla \varphi (y))$, which equals $0$ when $x=y$ because we (arbitrarily)
imposed that $K(0)=0$, is not continuous near $x=y$. Hence a {\it good} weak solution
has to verify that $\int_{\rr^2} \int_{\rr^2} \indiq_{\{x=y\}}f_s(\dd x) f_s(\dd y)=0$ for a.e. $s\geq 0$.

\subsection{Main result}
Our goal is to give a simple prooof of the following global existence result.

\begin{thm}\label{thm}
Fix $M \in (0,8\pi)$ and assume that $f_0 \in \cM_M(\rr^2)$.
There exists a global weak solution $f$ to \eqref{EDP} with initial condition
$f_0$. Moreover, for all $\gamma \in (M/(4\pi),2)$, there is a constant $A_{M,\gamma}>0$
depending only on $M$ and $\gamma$ such that for all $T>0$,
\begin{equation}\label{ccs}
\int_0^T \int_{\rr^2} \int_{\rr^2} \|x-y\|^{\gamma-2} 
f_s(\dd x) f_s(\dd y)\dd s \leq A_{M,\gamma}(1+T).
\end{equation}
\end{thm}

These solutions indeed satisfy that 
$\int_{\rr^2} \int_{\rr^2} \indiq_{\{x=y\}}f_s(\dd x) f_s(\dd y)=0$ for a.e. $s\geq 0$.

\subsection{References}

Actually, a stronger result is already known: gathering the results of Bedrossian-Masmoudi \cite{bm} 
and Wei \cite{dw}, there exists a global {\it mild} solution for any $f_0\in \cM_M(\rr^2)$ with $M<8\pi$.
But the proof in \cite{bm,dw} is long and complicate, and the goal of the present
paper is to provide a simple and robust non explosion proof, even if the solution we build is weaker.
Actually, a global solution is also built in \cite{bm,dw} when $f_0\in \cM_{8\pi}(\rr^2)$ satisfies 
$\max_{x \in \rr^2}f_0(\{x\})<8\pi$, a case we could also treat with a little more work.

\vip
This model was first introduced by Patlak \cite{p} and Keller-Segel \cite{ks}, as a model for chemotaxis.
For an exhaustive summary of the knowledge about this equation and related models, we refer the reader 
to the review paper 
Arumugam-Tyagi \cite{at} and to the book of Biler \cite{b}.
The main difficulty of this model lies in the tight competition between diffusion and attraction. 
Therefore it is not clear that a solution exists because a blow-up could occur due to the emergence of a cluster,
i.e. a Dirac mass. Thus, the whole problem is about determining if the solution ends-up by being concentrated 
in finite time or not.

\vip
As shown in J\" ager-Luckhaus \cite{jl}, this depends on the initial mass of 
the solution $M = \int_{\rr^2}f_0(\dd x)$, 
the solution globally exists if $M$ is small enough and explodes in the other case. 
The fact that solutions must explode in finite time if $M>8\pi$ is rather easy to show.
But the fact that $8\pi$ is indeed the correct thresold was much more difficult.

\vip

Biler-Karch-Lauren\c cot-Nadzieja \cite{bklnD,bklnP} proved the global existence of a weak solution 
in the subcritical case 
for every initial data which is a radially symmetric measure such 
that $f_0(\{0\})=0$ and 
$f_0(\rr^2) = M \leq 8\pi$, with a few other anodyne technical conditions.
\vip

At the same time, Blanchet-Dolbeault-Perthame \cite{bdp} proved the existence of a global weak {\it free energy} solution 
for initial data $f_0\in L^1_+(\rr^2)$ with mass $M<8\pi$, a finite moment of order $2$ and a finite entropy.
The core of the argument lies in the use of the
logarithmic Hardy-Littlewood-Sobolev inequality applied on a well chosen free-energy quantity.
Something noticeable is that the authors use this inequality with its optimal constant to get the correct thresold $8\pi$.
\vip

In Bedrossian-Masmoudi \cite{bm}, it is proven that under the condition that $\max_{x\in \rr^2} f_0(\{x\})<8\pi$, 
one can build mild solutions even in the supercritical case, which are stronger solutions than weak solutions, 
but these solutions are local in time.
Wei \cite{dw} built global mild solutions in the subcritical and critical cases and local mild solutions in the 
supercritical case for every initial data $f_0\in L^1(\rr^2)$, without any other condition. And these
two last papers can be put together to build global mild solutions as soon as $f_0(\rr^2)\leq 8\pi$ and
$\max_{x\in \rr^2} f_0(\{x\})<8\pi$.

\vip

Let us finally mention \cite{fj}, where global weak solutions were built for any measure initial condition $f_0$
such that $f_0(\rr^2)<2\pi$, with a light additional moment condition. This work was 
inspired by the work of Osada \cite{o} on vortices. The present paper consists in refining this approach, 
and surprisingly,
this allows us to treat the whole subcritical case.

\subsection{Motivation}
Our main goal is to present a simple proof of non explosion.
This proof relies on a two-particles moment computation: roughly, we show that for 
$\gamma\in (0,2)$ and for $(f_t)_{t\geq 0}$ a solution to \eqref{EDP}, it {\it a priori} holds that
$$
\frac{\dd}{\dd t} \int_{\rr^2}\int_{\rr^2} \Big(||x-y||^\gamma \land 1\Big) f_t(\dd x)f_t(\dd y)
\geq c_{\gamma,M}\int_{\rr^2}\int_{\rr^2} ||x-y||^{\gamma-2}\indiq_{\{||x-y||\leq 1\}}f_t(\dd x)f_t(\dd y),
$$
with $c_{\gamma,M}>0$ as soon as $\gamma\in (4M/\pi,2)$. By integration, this implies \eqref{ccs} and such an
{\it a priori} estimate is sufficient to build a global solution.
\vip

This computation seems simple and robust. 
Although they build a more regular weak solution, Blanchet-Dolbeault-Perthame \cite{bdp}
use some optimal Hardy-Littlewood-Sobolev inequality. Moreover, they have some little restrictions
on the initial conditions (finite entropy and moment of order $2$).
The proof of Bedrossian-Masmoudi \cite{bm} and Wei \cite{dw} is much longer and relies on
a fine study of what happens near each possible atom of the initial condition. Let us say again
that they build a much stronger solution.

\vip

In particular, due to its robusteness, we hope to be able to apply such a method to study
the convergence of the empirical measure of some stochastic particle system, as the number of
particles tends to infinity, to the solution of \eqref{EDP}. 
To establish such a convergence, one needs to show
the  non explosion of the particle system, uniformly in $N$ in some sense. 
It seems that the present method works very well and we hope to be able to treat
the whole subcritical case $M\in(0,8\pi)$ and even the critical case $M=8\pi$.
To our knowledge, only the case where $M\in(0,4\pi)$ has been studied, by an entropy method, see
Bresch-Jabin-Wang \cite{bjw}.

\vip

We do not treat the criticial case $M=8\pi$ in the present paper for the sake of conciseness.

\section{Proof}

We fix $M>0$ and $f_0 \in \cM_M(\rr^2)$.
For $\e\in (0,1]$, we introduce the following regularized versions
$$
K_\e (x) = -\frac{x}{2\pi(\|x\|^2+\e)}\quad \mbox{and} \quad
f_0^\e(x ) =\frac{1}{2\pi \e} \int_{\rr^2} e^{-\|x-y\|^2/(2\e)}f_0(\dd y).
$$ 
of $K$ and $f_0$. Since $K_\e$ and $f_0^\e$ are smooth, the equation 
\begin{equation}\label{EDPesp}
\partial_t f^\e + \nabla \cdot [ f^\e (K_\e \ast f^\e)] = \Delta f^\e
\end{equation}
starting from $f^\e_0$ has a unique classical solution $(f^\e_t(x))_{t\geq 0,x\in \rr^2}$. This
solution preserves mass, i.e
\begin{align}\label{conservation}
\int_{\rr^2}f_t^\e(\dd x) = \int_{\rr^2}f_0^\e(\dd x) =\int_{\rr^2}f_0(\dd x)=M
\qquad \hbox{for all $t\geq 0$},
\end{align}
where we write $f^\e_t(\dd x) = f^\e_t(x) \dd x$.
Multiplying \eqref{EDPesp} by $\varphi\in C^2_b(\rr^2)$, integrating on $[0,t]\times\rr^2$,
proceeding to some integrations by parts and using a symmetry argument, we classically find that
\begin{align}\label{ff}
\int_{\rr^2} \varphi (x) f_t^\e(\dd x)  =& \int_{\rr^2} \varphi (x) f_0^\e(\dd x)  
+ \int_0^t \int_{\rr^2} \Delta \varphi (x) f_s^\e(\dd x)\dd s\\
&+ \frac12\int_0^t \int_{\rr^2}\int_{\rr^2}  K_\e(x-y)\cdot[\nabla \varphi(x)-\nabla\varphi(y)]
f_s^\e(\dd x) f_s^\e (\dd y) \dd s.\notag
\end{align}

We now prove some compactness result.

\begin{prop}\label{compacite}
Fix $M>0$, $f_0 \in \cM_M(\rr^2)$ and consider the corresponding family $(f^\e)_{\e \in (0,1]}$.
The family $(f^\e)_{\e \in (0,1]}$ is relatively compact in $C([0,\infty), \cM_M (\rr^2))$,
endowed with the uniform convergence on compact time intervals, $\cM_M(\rr^2)$ being endowed
with the weak convergence topology.
\end{prop}

\begin{proof} We first prove that for each $t\geq 0$, the family
$(f_t^\e)_{\e\in (0,1]}$ is tight in $\cP(\rr^2)$. 
Since the family $(f_0^\e)_{\e\in (0,1]}$ is clearly tight, 
by the de la Vall\'ee Poussin theorem,
there exists $\psi:\rr^2 \to \rr_+$ such that $\lim_{|x|\to \infty} \psi(x)=\infty$ and 
$A=\sup_{\e\in (0,1]}\int_{\rr^2}\psi(x)f_0^\e(\dd x)<\infty$. Moreover, we can choose $\psi$ smooth
and such that $||\nabla^2\psi||$ is bounded by some constant $C$. It then immediately follows from \eqref{ff},
since $\|z\|\, \|K_\e(z)\|\leq 1/(2\pi)$, that 
for all $\e \in (0,1]$, all $t\geq 0$,
$$
\int_{\rr^2} \psi(x) f_t^\e(\dd x) \leq \int_{\rr^2}\psi(x)f_0^\e(\dd x) + C\Big( M+ \frac{M^2}{4\pi}\Big)t
\leq A+C\Big( M+ \frac{M^2}{4\pi}\Big)t.
$$
As $\lim_{|x|\to \infty} \psi(x)=\infty$, we conclude that indeed, $(f_t^\e)_{\e\in (0,1]}$ is tight
for each $t\geq 0$.
\vip

By the Arzela-Ascoli theorem, it is enough to prove that
$f^\e$ is uniformly Lipschitz continuous in time, in that there 
exists a constant $C>0$ such that for all $\e\in (0,1]$, all $t\geq s \ge 0$, $\delta ( f_t^\e, f_s^\e) \le C|t-s|$,
where $\delta$ metrizes the weak convergence topology on $\cM_M(\rr^2)$. As is well-known,
we may find a family $(\varphi_n)_{n\ge 0}$ of elements of $C^2_b(\rr^2)$  satisfying
$$
\|\varphi_n\|_\infty + \|\nabla\varphi_n\|_\infty+ \|\nabla^2\varphi_n\|_\infty\le 1 \qquad \hbox{for all $n\geq 0$}
$$
and such that the distance $\delta$ on $\cM_M (\rr^2)$ defined through
$$
\delta (f,g) = \sum_{n\ge 0} 2^{-n} \Big|\int_{\rr^2} \varphi_n(x)f(\dd x) - \int_{\rr^2} \varphi_n(x)g(\dd x)\Big|
$$
is suitable. But using \eqref{ff}, for all $n\ge 0$, 
\begin{align*}
&\Big|\int_{\rr^2}\varphi_n(x) f_t^\e(\dd x) - \int_{\rr^2}\varphi_n(x) f_s^\e (\dd x) \Big|\\
=& \Big|\int_s^t \int_{\rr^2} \Delta \varphi_n(x) f_u^\e(\dd x)\dd u 
+ \frac12 \int_s^t \int_{\rr^2} \int_{\rr^2} K_\e(x-y)\cdot [\nabla \varphi_n (x)-\nabla \varphi_n(y)] 
f_u^\e(\dd x) f_u^\e (\dd y) \dd u \Big|\\
\le& ( M+M^2/(4\pi))(t-s),
\end{align*}
by \eqref{conservation}, since $\|\nabla^2\varphi_n\|_\infty\leq 1$ and since 
$\|z\|\, \|K_\e(z)\|\leq 1/(2\pi)$.
We conclude that 
$$
\delta (f_t^\e, f_s^\e) \le \sum_{n\ge 0} 2^{-n}(M+M^2/(4\pi))(t-s) = 2(M+M^2/(4\pi))(t-s)
$$
as desired.
\end{proof} 
 
The following simple geometrical observation is crucial for 
our purpose.

\begin{lemma}\label{inegalitebarycentre}
For all pair of nonincreasing functions $\varphi, \psi : (0,\infty) \to (0,\infty)$, for all $X,Y,Z \in \rr^2$ 
such that $X+Y+Z=0$, we have 
$$
\Delta = [ \varphi (\|X\|)X + \varphi (\|Y\|)Y + \varphi(\|Z\|)Z ] \cdot [\psi (\|X\|)X 
+ \psi (\|Y\|)Y + \psi(\|Z\|)Z ] \ge 0.
$$
\end{lemma}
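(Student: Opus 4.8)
The plan is to reduce to an ordered configuration and prove the sharper bound, since the bare inequality $\Delta\ge 0$ then follows for free. Note first that $\Delta$ is a symmetric function of $X,Y,Z$ (permuting them permutes the summands inside each of the two factors) and the constraint $X+Y+Z=0$ is symmetric as well, so I may assume without loss of generality that $\|X\|\le\|Y\|\le\|Z\|$; it then suffices to establish the displayed ``more precise'' estimate. Writing $p_1=\varphi(\|X\|)$, $p_2=\varphi(\|Y\|)$, $p_3=\varphi(\|Z\|)$ and $q_1=\psi(\|X\|)$, $q_2=\psi(\|Y\|)$, $q_3=\psi(\|Z\|)$, the fact that $\varphi$ and $\psi$ are nonincreasing gives $p_1\ge p_2\ge p_3>0$ and $q_1\ge q_2\ge q_3>0$, so that all the increments $p_1-p_2,\ p_2-p_3,\ q_1-q_2,\ q_2-q_3$ are nonnegative. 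Once I show $\Delta\ge(p_1-p_2)(q_1-q_2)\|X\|^2$, the claim $\Delta\ge0$ is immediate, the right-hand side being a product of nonnegative factors.

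The key step is to eliminate the \emph{middle} vector $Y=-(X+Z)$, and not one of the two extreme ones. Setting $P=p_1X+p_2Y+p_3Z$ and $Q=q_1X+q_2Y+q_3Z$, so that $\Delta=P\cdot Q$, this substitution yields $P=(p_1-p_2)X-(p_2-p_3)Z$ and likewise $Q=(q_1-q_2)X-(q_2-q_3)Z$. Expanding the scalar product then gives
\begin{align*}
\Delta=(p_1-p_2)(q_1-q_2)\|X\|^2+(p_2-p_3)(q_2-q_3)\|Z\|^2-\big[(p_1-p_2)(q_2-q_3)+(p_2-p_3)(q_1-q_2)\big]\,X\cdot Z.
\end{align*}
The first two terms are manifestly nonnegative, being products of nonnegative increments with squared norms.

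It remains to control the cross term, and here I would use the constraint once more: from $Y=-(X+Z)$ one gets $2\,X\cdot Z=\|Y\|^2-\|X\|^2-\|Z\|^2$, which is $\le 0$ precisely because $\|Y\|\le\|Z\|$ (so $\|Y\|^2\le\|Z\|^2\le\|X\|^2+\|Z\|^2$). Since the bracketed coefficient multiplying $X\cdot Z$ is a sum of products of nonnegative increments, the whole cross term $-\big[\cdots\big]\,X\cdot Z$ is nonnegative. Hence all three contributions are nonnegative, and discarding the last two leaves exactly $\Delta\ge(p_1-p_2)(q_1-q_2)\|X\|^2=[\varphi(\|X\|)-\varphi(\|Y\|)][\psi(\|X\|)-\psi(\|Y\|)]\|X\|^2$, as desired.

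The only genuine subtlety — and the structural reason the statement holds — is the choice of which vector to eliminate. Discarding the largest vector $Z$ instead would leave a cross term proportional to $X\cdot Y=\tfrac12(\|Z\|^2-\|X\|^2-\|Y\|^2)$, whose sign is undetermined (it is positive for an obtuse triangle), and the quadratic form would then look genuinely indefinite. Eliminating the middle vector $Y$ is exactly what forces the surviving cross term $X\cdot Z$ to be nonpositive, converting the expression into a sum of three nonnegative terms. I expect this to be the main, indeed essentially the only, point requiring insight; everything else is routine bookkeeping with the ordering induced by the monotonicity of $\varphi$ and $\psi$.
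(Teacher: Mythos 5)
Your proof is correct and follows essentially the same route as the paper's: eliminate the middle vector $Y=-(X+Z)$, expand $\Delta$ as a combination of $\|X\|^2$, $\|Z\|^2$ and $X\cdot Z$ with nonnegative coefficients coming from the monotonicity of $\varphi$ and $\psi$, and conclude via $X\cdot Z\le 0$, which you justify by the same identity $2X\cdot Z=\|Y\|^2-\|X\|^2-\|Z\|^2$ the paper uses. Your closing remark on why one must eliminate the middle vector is a nice observation the paper leaves implicit, but the argument itself is identical.
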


\begin{proof}
We may study only the case where $\|X\| \le \|Y\| \le \|Z\|$. Since $Y=-X-Z$,
\begin{gather*}
\varphi (\|X\|)X + \varphi (\|Y\|)Y + \varphi(\|Z\|)Z = \lambda X - \mu Z, \\
\psi (\|X\|)X + \psi (\|Y\|)Y + \psi(\|Z\|)Z = \lambda' X - \mu' Z,
\end{gather*}
where $\lambda = \varphi (\|X\|)-\varphi (\|Y\|) \ge 0$, 
$\mu = \varphi (\|Y\|)-\varphi (\|Z\|) \ge 0$, 
$\lambda' = \psi (\|X\|)-\psi (\|Y\|) \ge 0$ and $\mu' = \psi (\|Y\|)-\psi (\|Z\|) \ge 0$. Therefore,
$$
\Delta = \lambda \lambda' \|X\|^2 + \mu \mu' \|Z\|^2 
- (\lambda \mu' + \lambda' \mu)X\cdot Z\geq 0
$$
as desired, because $X\cdot Z\leq 0$. Indeed,
if $X\cdot Z >0$, then
$\|Y\|^2=\|Z+X\|^2 = \|Z\|^2 + \|X\|^2 + 2X\cdot Z > \|Z\|^2\geq \|Y\|^2$, which is absurd. 
\end{proof}

The following computation is the core of the paper.

\begin{prop}\label{estimegamma}
Recall that $M \in (0,8\pi)$, that $f_0 \in \cM_M(\rr^2)$.
For all $\gamma \in (M/(4\pi), 2)$, there is a constant $A_{M,\gamma}>0$ depending only on $M$ and $\gamma$ 
such that for all $\e\in (0,1]$, all $T >0$,
$$
\int_0^T\int _{\rr^2} \int_ {\rr^2} \|x-y\|^{\gamma -2} f_s^\e(\dd x)f_s^\e(\dd y) \dd s 
\leq A_{M,\gamma}(1+T).
$$
\end{prop}

\begin{proof}
For any smooth $\psi:(\rr^2)^2\to\rr$ such that $\psi(x,y)=\psi(y,x)$, it holds that
\begin{align*}
\frac\dd{\dd t} \int_{\rr^2}\int_{\rr^2} \psi(x,y)f^\e_t(\dd x)f^\e_t(\dd y)
=&2\int_{\rr^2}\int_{\rr^2} \psi(x,y)[\Delta f^\e_t(x)-\nabla \cdot ( f^\e_t(x) (K_\e \ast f^\e_t))(x)]f^\e_t(y) 
\dd x \dd y\\
=&2\int_{\rr^2}\int_{\rr^2} [\Delta_x\psi(x,y) + (K_\e \ast f^\e_t)(x)\cdot\nabla_x\psi(x,y)]f^\e_t(x)f^\e_t(y) 
\dd x \dd y.
\end{align*}
We  fix $\gamma \in  (M/(4\pi), 2)$, introduce $\varphi(r)=r^{\gamma/2}/(1+r^{\gamma/2})$,
and set $\psi(x,y)=\varphi(||x-y||^2)$.
We have 
$$
\varphi'(r)=\frac\gamma 2 \frac{r^{\gamma/2-1}}{(1+r^{\gamma/2})^2}
\qquad \hbox{and}\qquad 
\varphi''(r) = \frac\gamma2 \frac{r^{\gamma/2-2}}{(1+r^{\gamma/2})^2}\Big(\frac\gamma2-1
-\frac{\gamma r^{\gamma/2}}{1+r^{\gamma/2}}\Big)
$$
and
\begin{gather*}
\nabla_x \psi(x,y)= 2 \varphi'(||x-y||^2)(x-y)= \gamma \frac{||x-y||^{\gamma-2}}{(1+||x-y||^{\gamma})^2}(x-y),\\
\Delta_x \psi(x,y)=4 \varphi'(||x-y||^2)+ 4||x-y||^2\varphi''(||x-y||^2)
= \gamma^2 \frac{||x-y||^{\gamma-2}}{(1+||x-y||^{\gamma})^2}\Big(1
-2 \frac{||x-y||^{\gamma}}{1+||x-y||^{\gamma}}\Big).
\end{gather*}
Hence
\begin{equation}\label{x1}
\frac\dd{\dd t} \int_{\rr^2}\int_{\rr^2} \varphi(||x-y||^2)f^\e_t(\dd x)f^\e_t(\dd y)= J_t^\e+S_t^\e,
\end{equation}
where
\begin{align*}
J^\e_t=&2\gamma^2 \int_{\rr^2}\int_{\rr^2} \frac{||x-y||^{\gamma-2}}{(1+||x-y||^{\gamma})^2}\Big(1
-2 \frac{||x-y||^{\gamma}}{1+||x-y||^{\gamma}}\Big) f^\e_t(x)f^\e_t(y) \dd x \dd y,\\
S^\e_t=&2\gamma \int_{\rr^2}\int_{\rr^2}\int_{\rr^2} \frac{||x-y||^{\gamma-2}}{(1+||x-y||^{\gamma})^2}(x-y)\cdot
K_\e(x-z) f^\e_t(x)f^\e_t(y)f^\e_t(z) \dd x \dd y \dd z.
\end{align*}

First, we have
\begin{equation}\label{x2}
J^\e_t \geq \gamma (\gamma+M/(4\pi)) \int_{\rr^2}\int_{\rr^2} 
\frac{||x-y||^{\gamma-2}}{(1+||x-y||^{\gamma})^2}
f^\e_t(x)f^\e_t(y) \dd x \dd y - M^2C_{M,\gamma},
\end{equation}
where $C_{M,\gamma}>0$ is a constant such that for all $a>0$, recall that $\gamma>M/(4\pi)$,
$$
2\gamma^2 \frac{a^{\gamma-2}}{(1+a^{\gamma})^2}\Big(1-2 \frac{a^{\gamma}}{1+a^{\gamma}}\Big) 
\geq 2\gamma \Big(\frac{\gamma+M/(4\pi)}2\Big) \frac{a^{\gamma-2}}{(1+a^{\gamma})^2} - C_{M,\gamma}.
$$

Next, by symmetrization, we have
\begin{align}\label{jj}
S^\e_t=&\gamma \int_{\rr^2}\int_{\rr^2}\int_{\rr^2} \frac{||x-y||^{\gamma-2}}{(1+||x-y||^{\gamma})^2}(x-y)\cdot
(K_\e(x-z)-K_\e(y-z)) f^\e_t(x)f^\e_t(y)f^\e_t(z) \dd x \dd y \dd z \\
=& \frac\gamma3 \int_{\rr^2}\int_{\rr^2}\int_{\rr^2} F_\e(x,y,z)f^\e_t(x)f^\e_t(y)f^\e_t(z) \dd x \dd y \dd z, \notag
\end{align}
where
\begin{align*}
F_\e(x,y,z) =& [K_\e(x-z)-K_\e(y-z)]\cdot (x-y) \frac{||x-y||^{\gamma-2}}{(1+||x-y||^{\gamma})^2} \\
&+[K_\e(y-x)-K_\e(z-x)]\cdot (y-z) \frac{||y-z||^{\gamma-2}}{(1+||y-z||^{\gamma})^2}\\
&+[K_\e(z-y)-K_\e(x-y)]\cdot (z-x)\frac{||z-x||^{\gamma-2}}{(1+||z-x||^{\gamma})^2}.
\end{align*}
Introducing $X=x-y$, $Y=y-z$ and $Z=z-x$ and recalling that $K_\e(X)=\frac{-X}{2\pi(\|X\|^2+\e)}$, we find
\begin{align*}
2\pi F_\e(x,y,z)=&\Big[\frac{Z}{\|Z\|^2+\e}+\frac{Y}{\|Y\|^2+\e}\Big]\cdot X 
\frac{||X||^{\gamma-2}}{(1+||X||^{\gamma})^2}\\
&+\Big[\frac{X}{\|X\|^2+\e}+\frac{Z}{\|Z\|^2+\e}\Big]\cdot Y\frac{||Y||^{\gamma-2}}{(1+||Y||^{\gamma})^2}\\
&+\Big[\frac{Y}{\|Y\|^2+\e}+\frac{X}{\|X\|^2+\e}\Big]\cdot Z\frac{||Z||^{\gamma-2}}{(1+||Z||^{\gamma})^2}.
\end{align*}
We now introduce 
\begin{align*}
G(x,y,z)=& \frac{\|X\|^{\gamma-2}}{(1+\|X\|^\gamma)^{2}}+\frac{\|Y\|^{\gamma-2}}{(1+\|Y\|^\gamma)^{2}}
+\frac{\|Z\|^{\gamma-2}}{(1+\|Z\|^\gamma)^{2}}\\
\geq &X\cdot\frac{X}{\|X\|^2+\e}\frac{\|X\|^{\gamma-2}}{(1+\|X\|^\gamma)^{2}}
+Y\cdot\frac{Y}{\|Y\|^2+\e}\frac{\|Y\|^{\gamma-2}}{(1+\|Y\|^\gamma)^{2}}
+Z\cdot\frac{Z}{\|Z\|^2+\e}\frac{\|Z\|^{\gamma-2}}{(1+\|Z\|^\gamma)^{2}}.
\end{align*}
Hence $G(x,y,z)+2\pi F_\e(x,y,z)$ is larger than
\begin{align*}
\Big(\frac{X}{\|X\|^2\!+\!\e}\!+\!\frac{Y}{\|Y\|^2\!+\!\e}\!+\!\frac{Z}{\|Z\|^2\!+\!\e}&\Big)\cdot
\Big(X\frac{||X||^{\gamma-2}}{(1+||X||^{\gamma})^2}+Y\frac{||Y||^{\gamma-2}}{(1+||Y||^{\gamma})^2}+
Z\frac{||Z||^{\gamma-2}}{(1+||Z||^{\gamma})^2}\Big),
\end{align*}
which is nonnegative according to Lemma \ref{inegalitebarycentre}, since $r\to 1/(r^2+\e)$
and $r\to r^{\gamma-2}/(1+r^\gamma)^{2}$ are both nonincreasing on $(0,\infty)$
and since $X+Y+Z=0$. Thus $F_\e(x,y,z)\geq -G(x,y,z)/(2\pi)$. Recalling \eqref{jj}, we conclude that
\begin{align}\label{x3}
S^\e_t \ge& -\frac\gamma{6\pi} \int_{\rr^2} \!\int_{\rr^2}\!\int_{\rr^2}\! \!
G(x,y,z) f_t^\e (\dd x) f_t^\e(\dd y) f_t^\e (\dd z)\\
=& - \frac{\gamma M}{2\pi}  \int_{\rr^2} \int_{\rr^2}  \frac{\|x-y\|^{\gamma-2}}{(1+\|x-y\|^\gamma)^2} 
f_t^\e (\dd x) f_t^\e(\dd y)\notag
\end{align}
by symmetry again.
\vip
Gathering \eqref{x1}-\eqref{x2}-\eqref{x3}, we find
$$
\frac\dd{\dd t} \int_{\rr^2}\int_{\rr^2} \varphi(||x-y||^2)f^\e_t(\dd x)f^\e_t(\dd y)
\geq \gamma \Big(\gamma-\frac{M}{4\pi}\Big)\int_{\rr^2} \int_{\rr^2} 
\frac{\|x-y\|^{\gamma-2}}{(1+\|x-y\|^\gamma)^2} f_t^\e (\dd x) f_t^\e(\dd y)-M^2C_{M,\gamma}.
$$
Integrating on $[0,T]$, using that $\gamma>M/(4\pi)$ and and that $\varphi$ is $[0,1]$-valued,
we end with
$$
\int_0^T\int_{\rr^2} \int_{\rr^2} \frac{\|x-y\|^{\gamma-2}}{(1+\|x-y\|^\gamma)^2} f_t^\e (\dd x) f_t^\e(\dd y)\dd t
\leq \frac{M^2 + M^2C_{M,\gamma}T}{\gamma(\gamma-M/(4\pi))}.
$$
One easily completes the proof, using that there is $D_\gamma>0$ such that 
$a^{\gamma-2} \leq 2 a^{\gamma-2}/(1+a^\gamma)^2+D_\gamma$
for all $a>0$
\end{proof}

We finally give the 

\begin{proof}[Proof of Theorem \ref{thm}]
Recall that $M\in (0,8\pi)$, that $f_0 \in \cM_M(\rr^2)$, and that $(f^\e)_{\e\in (0,1]}$ is the corresponding 
family of regularized solutions.
By Proposition \ref{compacite}, we can find $(\e_k)_{k\ge 0}$ and 
$f\in C([0,\infty),\cM_M(\rr^2))$ such that $\lim_k\e_k=0$ and
$\lim_k f^{\e_k}= f$ in $C([0,\infty),\cM_M(\rr^2))$,
endowed with the uniform convergence on compact time intervals, $\cM_M(\rr^2)$ being endowed with the
weak convergence topology. By definition of $f_0^\e$, we obviously have $f|_{t=0}=f_0$.
By Proposition \ref{estimegamma} and the Fatou lemma, for all $\gamma \in (M/(4\pi),2)$, we have
\begin{equation}\label{ess}
\int_0^T \int_{\rr^2} \int_{\rr^2} \|x-y\|^{\gamma-2}
f_s(\dd x) f_s(\dd y) \dd s \leq A_{M,\gamma}(1+T).
\end{equation}

It only remains to check that $f$ is a weak solution to \eqref{EDP}. 
We fix $\varphi \in C^2_b(\rr^2)$
and use \eqref{ff} to write
$\int_{\rr^2} \varphi (x) f_t^{\e_k}(\dd x) = I_k(t)+J_k(t)$, where  
\begin{gather*}
I_k(t)=\int_{\rr^2} \varphi (x) f_0^{\e_k}(\dd x)+\int_0^t \int_{\rr^2} \Delta \varphi (x) f_s^{\e_k}(\dd x)\dd s,\\
J_k(t)=\frac12\int_0^t \int_{\rr^2}\int_{\rr^2}  K_{\e_k}(x-y)\cdot [\nabla\varphi (x)-\nabla \varphi(y)] 
f_s^{\e_k}(\dd x) f_s^{\e_k} (\dd y) \dd s.
\end{gather*}
Since $\varphi$ and $\Delta \varphi$ are continuous and bounded, we immediately conclude that
$$
\lim_k \int_{\rr^2} \!\varphi (x) f_t^{\e_k}(\dd x)=\int_{\rr^2} \!\varphi (x) f_t(\dd x)
\quad \hbox{and}\quad 
\lim_k I_k(t)=\int_{\rr^2} \!\varphi (x) f_0(\dd x)+ \int_0^t \!\int_{\rr^2} \!\Delta \varphi (x) f_s(\dd x)\dd s,
$$
and it only remains to check that for
$J(t)=\frac12\int_0^t \int_{\rr^2}\int_{\rr^2}  K(x-y)\cdot[\nabla \varphi (x)-\nabla \varphi(y)] 
f_s(\dd x) f_s (\dd y) \dd s$, we have $\lim_k J_k(t)=J(t)$. To this end, we write $J_k(t)=J_k^1(t)+J_k^2(t)$, 
where
\begin{gather*}
J_k^1(t)=\frac12\int_0^t \int_{\rr^2}\int_{\rr^2}  K(x-y)\cdot [\nabla\varphi (x)-\nabla \varphi(y)] 
f_s^{\e_k}(\dd x) f_s^{\e_k} (\dd y) \dd s,\\
J_k^2(t)=\frac12\int_0^t \int_{\rr^2}\int_{\rr^2}  [K_{\e_k}(x-y)-K(x-y)]\cdot [\nabla\varphi (x)-\nabla \varphi(y)] 
f_s^{\e_k}(\dd x) f_s^{\e_k} (\dd y) \dd s.
\end{gather*}
Recalling the expression of $K$ and that $\varphi \in C^2_b(\rr^2)$, we see that 
$g(x,y)=K(x-y)\cdot [\nabla \varphi (x)-\nabla \varphi(y)]$ is bounded and continuous on the set
$\rr^2 \setminus D$, where $D=\{(x,y) \in \rr^2\times \rr^2:x=y\}$.
Since $f^{\e_k}_s\otimes f^{\e_k}_s$ goes weakly to $f_s\otimes f_s$ for each $s\geq 0$
and since $(f_s\otimes f_s)(D)=0$ for a.e. $s\geq 0$ by \eqref{ess}, 1, we conclude that
$\lim_k \int_{\rr^2}\int_{\rr^2} g(x,y)f_s^{\e_k}(\dd x) f_s^{\e_k} (\dd y)= \int_{\rr^2}\int_{\rr^2} g(x,y)f_s(\dd x) 
f_s(\dd y)$ for a.e. $s\geq 0$, whence $\lim_k J^1_k(t)=J_k(t)$ by dominated convergence.

\vip

We finally have to verify that $\lim_k J^2_k(t)=0$. We fix $\gamma \in (M/(4\pi),2)$ and write
$$
\|z\|\,\|K(z)-K_\e(z)\|
=\frac{\e}{2\pi(\e+\|z\|^2)}
\leq \min(1,\e\|z\|^{-2})\leq (\e\|z\|^{-2})^{1-\gamma/2}= \e^{1-\gamma/2}\|z\|^{\gamma-2}.
$$
Thus
\begin{align*}
|J^2_k(t)| \leq& \|\nabla^2 \varphi\|_\infty \e_k^{1-\gamma/2} \int_0^t \int_{\rr^2}\int_{\rr^2} \|x-y\|^{\gamma-2}
f_s^{\e_k}(\dd x) f_s^{\e_k} (\dd y) \dd s,
\end{align*}
which tends to $0$ as desired since $\sup_{\e\in (0,1]}\int_0^t \int_{\rr^2}\int_{\rr^2} \|x-y\|^{\gamma-2}
f_s^{\e_k}(\dd x) f_s^{\e_k} (\dd y) \dd s<\infty$
by Proposition \ref{estimegamma}.
\end{proof}

\end{document}